\documentclass[10pt]{amsart}

\usepackage{amssymb,amsmath, graphicx, verbatim,epsfig,color,}
\usepackage[latin1]{inputenc}
\usepackage[all,cmtip]{xy}

\usepackage{array}
\usepackage{multirow}

\usepackage{varioref}
\usepackage{arydshln} 

\setlength{\parskip}{2.5ex plus 0.5ex minus0.2ex}
\setlength{\parindent}{0em}

\setlength{\textwidth}{16cm}
\setlength{\oddsidemargin}{2mm}
\setlength{\evensidemargin}{1mm}

\addtolength{\topmargin}{-1.5cm}
\addtolength{\textheight}{2cm}

\newcommand{\C}{\mbox{${\mathbb C}$}}

\newcommand{\p}{\mbox{${\mathbb P}$}}

\newtheorem{pr}{Proposition}[section]
\newtheorem{tm}{Theorem}[section]
\newtheorem{cor}[pr]{Corollary}

\theoremstyle{definition}		
\newtheorem{remark}{Remark}[section]

\newcommand{\grass}{{G}(k,n)}

\newcommand{\barr}{\overline}

\newcommand{\smap}{\barr{M}}

\newcommand{\st}{_{g,r}(X,\beta)}

\newcommand{\stog}{_{0,0}(\grass,d)}
\newcommand{\stogii}{_{0,0}(\grass,2)}
\newcommand{\stogiii}{_{0,0}(\grass,3)}

 \setlength{\unitlength}{1cm}
 \makeatletter
\def\cdotfill{\leaders\hbox to.6em{\hss$\cdot$\hss}\hskip\z@ plus  1fill}
\makeatother

\title{Poincar\'e polynomials of stable map spaces to Grassmannians}
\author{Alberto L\'opez Mart\'in}
\thanks{The author was partially supported by the Universit\"at Z\"urich under the Forschungskredit Nr. 57104401, and by the Swiss National Science Foundation project 200020\_126756.}
\subjclass[2010]{Primary 14N35\ \ Secondary 14M15 $\cdot$ 14N15}
\address{Korea Institute for Advanced Study, Hoegiro 87, Dongdaemun-gu, Seoul 130-722, Republic of Korea }
\curraddr{Department of Mathematics, Tufts University, Bromfield-Pearson Hall, 503 Boston Avenue, Medford, MA 02155}
\email{alberto.lopez@tufts.edu}

\begin{document}
\date{\today}
\bibliographystyle{alpha}
\begin{abstract}
  In this paper, we give generating functions for the Betti numbers of $\overline{M}\stog$, the moduli stack of zero pointed genus zero degree $d$ stable maps to the Grassmannian $\grass$ for $d=2$ and $3$. \end{abstract}
\maketitle

\section{Introduction}
Moduli spaces of stable maps have been used to give answers to many problems in enumerative geometry that were inaccessible by previous methods. Let $X$ be a smooth projective variety over $\mathbb C$, and let $\beta$ be a curve class on $X$. The set of isomorphism classes of pointed maps $(C,p_1,\dots,p_r,f)$ where $C$ is a projective nonsingular curve of genus $g$, the points $p_1,\dots,p_r$ are distinct marked points on $C$, and $f$ is a morphism from $C$ to $X$ with $f_*([C])=\beta$, is denoted by $M\st$. Points in its compactification, the moduli space $\smap\st$,  parameterize maps $(C,p_1,\dots,p_r,f)$, where $C$ is a projective, connected, \emph{nodal} curve of genus $g$, the points $p_1,\dots,p_r$ are distinct \emph{nonsingular} points, and $f$ is a morphism from $C$ to $X$ such that $f_*([C])=\beta$. The stability condition is due to Kontsevich \cite{KM94} and it amounts to requiring finiteness of the automorphisms of the map. The moduli space of stable maps has been a key tool to easily formulate and predict enumerative results in the geometry of curves. We refer the reader to \cite{FP97} for further details about these moduli spaces and an introduction to Gromov-Witten theory.

The computation of Betti numbers has proven to be useful in the calculation of Chow or cohomology rings of different spaces. Getzler and Pandharipande \cite{GP06} computed the Betti numbers of degree $d$ genus zero stable maps to a projective space target using (equivariant) Serre characteristics, and their results were used later by Behrend and O'Halloran \cite{BO03} in the computation of cohomology rings of the stable map spaces $\overline{M}_{0,0} (\mathbb{P}^n , d )$. In work still in progress, Behrend computes the Betti numbers of the space of genus zero degree three stable maps to the infinite-dimensional Grassmannian  $\overline{M}_{0,0} (G(k,\C^\infty) , 3)$. In \cite{Beh}, he uses the methods developed in \cite{GP06} and introduces the modified Serre characteristic of a smooth stack.

 Our paper explores the case of a target variety other than projective space, using a method different from those mentioned above. We will consider the case of zero pointed genus zero degree $d$ stable maps to the Grassmannian variety of $k$-dimensional planes in $\C^n$ in degrees two and three. Our computations will lead to expressions for the Poincar\'e polynomials of these spaces, and hence their Betti numbers. For that, we will consider a torus action on $\smap\stog$ with isolated fixed points. In the case of torus actions on varieties, Bia{\l}ynicki-Birula \cite{BB73,BB76} showed that there exists a decomposition of the variety into cells with certain properties. It follows from his results that the cohomology of these varieties can be determined from the fixed loci of the torus action. Our computations rely on previous work of Oprea \cite{Op06} for moduli spaces of stable maps. In his paper, among other results, he obtained a Bia{\l}ynicki-Birula decomposition of the stack of stable maps and related it to previous work of Gathmann \cite[3.2.]{Op06}. Combining some of the results in \cite{Op06} (see Corollary 4, Proposition 5, Lemma 6, Corollary 14, and Proposition 15), we find that the cohomology (with $\mathbb{Q}$ coefficients) of the moduli space of stable maps considered by us is determined from the fixed loci in the manner of the usual Bia{\l}ynicki-Birula decomposition of smooth projective varieties. (The method developed here can be, in principle, applied to maps of arbitrary degree, as it relies on the combinatorial description of the fixed points of the torus action on the moduli stack. The case of pointed maps could also be treated with similar methods.)

It is relevant to mention that Str\o mme \cite{Str87} worked out the cohomology of Quot schemes parametrizing maps from the projective line to a Grassmannian variety using torus actions and the Bia{\l}ynicki-Birula decomposition mentioned above. Later, Chen generalized  the method to the case of hyperquot schemes and computed a generating function for the Poincar\'e polynomials of these, see \cite{Che01}. In \cite{Man00}, Manin calculated the virtual Poincar\'e polynomials of the moduli space of stable genus 0 maps to a generalized flag variety. 

The outline of the paper is as follows. In Section \ref{torus}, we consider certain inclusion maps between Grassmannians, and obtain lists of additional weights for the action arising from these inclusions. In Section \ref{poincare}, we calculate the Betti numbers of small degree stable map spaces to the Grassmannian. Lastly, in the Appendix we recall the definition of $q$-binomial coefficients and state some of the identities used in the proofs of the results in Section \ref{poincare}.

An application of our results appears in recent work of Chung, Hong, and Kiem \cite{CHK12}.

\section{Torus actions and Grassmannians}\label{torus}

\subsection{Torus action and fixed loci} \label{loci}

Let $\C^*=\C\setminus\{0\}$ denote the multiplicative group of complex numbers and $\mathbb{T}$ an algebraic torus. We have $\mathbb{T}\simeq(\C^*)^N$ for some $N$. Let $\{e_1,\dots,e_{N}\}$ be the standard basis of $\C^{N}$ and assume $\alpha_1,\dots,\alpha_N\in\mathbb N$ are distinct. The $(\alpha_1,\dots,\alpha_N)$-weighted action of a torus on $\C^N$ described by
$$\begin{array}{c}
     t \cdot (z_1,\dots,z_N)
     =
     (t_1^{\alpha_1}z_1,\dots,t_N^{\alpha_N}z_N)
  \end{array}$$
 induces an action on the Grassmannian $G(k, N)$ of $k$-vector spaces in $\C^N$. If we describe the elements in the Grassmannian by matrices, the fixed locus of the latter action is given by $(k\times N)$-matrices in row echelon form satisfying two extra conditions: there are no zero rows and there is only one nonzero element per row (the pivots). 
 
We will denote each of these fixed points by a $k$-tuple of natural numbers $(a_1\cdots a_k)$, with $a_1<\dots<a_k$. Note that the elements $e_{a_1}, e_{a_2},\dots, e_{a_k}$ span a $k$-plane which is a fixed point of the torus action on $G(k,N)$, so the number of fixed points in $G(k,N)$ is $N\choose k$.

$$
\begin{array}{l}\left(\begin{array}{cccccccccccc}
    0 & 0 &1 & 0                                 &\multicolumn{6}{c}  \cdotfill & 0  &0\\
    0 & \multicolumn{3}{c}  \cdotfill  & 1 & 0  &0& \multicolumn{4}{c}  \cdotfill   &0  \\
        & \vdots                                      & &  &  &  & \vdots &  &  &    &  & \vdots \\
     0 & \multicolumn{5}{c}  \cdotfill  & 0 & \multicolumn{3}{c}  \cdotfill  & 1 &  0 
\end{array}\right)
\\
\begin{picture}(0, 0)\thicklines
  \put(1, 0.1){\vector(-1,0){0.40}} 
   \put(0.5, 0){$|$}
   \put(1.05,.05){\tiny{$a_1$}}
   \put(1.35, 0.1){\vector(1,0){0.40}} 
   \put(1.75, 0){$|$} 
\end{picture}
\\
\begin{picture}(0, 0)\thicklines
  \put(1.5, 0.1){\vector(-1,0){0.90}} 
   \put(0.5, 0){$|$}
   \put(1.6,0){\tiny{$a_2$}}
   \put(1.95, 0.1){\vector(1,0){0.9}} 
   \put(2.85, 0){$|$} 
\end{picture}\\

\begin{picture}(0, 0)\thicklines
  \put(2.725, 0.1){\vector(-1,0){2.1}} 
   \put(0.5, 0){$|$}
   \put(3,0){\tiny{$a_k$}}
   \put(3.4, 0.1){\vector(1,0){2.1}} 
   \put(5.5, 0){$|$} 
\end{picture}\\

\\ 
\end{array} 
$$ 
\subsection{Additional weights}
For $1\leq m\leq n+1$, let us denote by $V^{(m)}$ a vector subspace of $\C^{n+1}$ spanned by $n$ of the coordinate vectors, $V^{(m)}=\mathrm{Span}(e_1, e_2,\dots,\widehat{e}_m,\dots,e_{n+1})$. Let us also define $V_{(m)}$ to be the quotient vector space given by $\C^{n+1}/\{e_m\}$, with projection $p_{m}: \C^{n+1} \rightarrow V_{(m)}$.

The inclusion $V^{(m)}\rightarrow\C^{n+1}$ induces the inclusion map
$$\iota_m:G(k,V^{(m)})\hookrightarrow G(k,\C^{n+1})=G(k,n+1)$$ defined by sending $\Lambda\subset V^{(m)}$ to $\Lambda\subset \C^{n+1}$, and the injective map
$$\kappa_m:G(k,V_{(m)})\rightarrow G(k+1,n+1)$$ obtained by mapping a $k$-space $\Lambda\subset V_{(m)}$ to $p_m^{-1}(\Lambda)\subset \C^{n+1}$.

\begin{pr}\label{kn+1}
 Let $C$ be a genus 0 curve with at worst nodes as singularities. Let $f: C \rightarrow G(k,V^{(m)})$ be a torus-invariant genus 0 stable
  map. Then the inclusion $\iota_m$ induces
  an injective map $$H^0(C,f^*T_{G(k,V^{(m)})}) \rightarrow
  H^0(C,f^*\iota_m^*T_{G(k, n+1)})$$ whose cokernel is given by
  the following list of weights
\begin{equation}\label{p21}[ \alpha_{m}-\alpha_{a_1},\dots, \alpha_{m}-\alpha_{a_{k-1}},
\{\alpha_{m}-(\frac{s}{d_i}\alpha_{b_u}+\frac{t}{d_i}\alpha_{b_v})\}_{0\leq
  s,t\leq d_i, s+t=d_i} ]\end{equation} for every irreducible component of $C$ mapping with degree $d_i$ to the line joining $(a_1\cdots a_{k-1}b_u)$ and $(a_1\cdots a_{k-1}b_v)$, with the following weights removed
\begin{align}\label{2p21}[\alpha_{m}-\alpha_{a_{1}},\dots,\alpha_{m}-\alpha_{a_{k}} ]\end{align}
for every node of $C$ mapping to the point $(a_1\cdots a_k)$.
\end{pr}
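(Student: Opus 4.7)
The natural strategy is to identify the normal bundle of the inclusion $\iota_m$, invoke the short exact sequence of tangent bundles, and then compute $H^0$ of the pulled-back normal bundle via the normalization of $C$, component by component and node by node.

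\textbf{Step 1 (Normal bundle of $\iota_m$).} Using $T_{G(k,N)}|_W = \mathrm{Hom}(W,\C^N/W)$, I identify
$$N_{\iota_m} \;\simeq\; \mathrm{Hom}\!\bigl(S,\,\C^{n+1}/V^{(m)}\bigr) \;\simeq\; S^{\vee}\otimes \chi_m,$$
where $S$ is the tautological rank-$k$ subbundle on $G(k,V^{(m)})$ and $\chi_m$ denotes the trivial line bundle with torus weight $\alpha_m$. Pulling the short exact sequence
$0\to T_{G(k,V^{(m)})}\to \iota_m^* T_{G(k,n+1)} \to N_{\iota_m}\to 0$
back by $f$ and taking the long exact sequence in cohomology yields immediate injectivity on $H^0$, together with
$$\operatorname{coker}\bigl(H^0(f^* T_{G(k,V^{(m)})})\to H^0(f^*\iota_m^* T_{G(k,n+1)})\bigr)\;\hookrightarrow\; H^0(C,f^* N_{\iota_m}).$$

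\textbf{Step 2 (Cokernel = $H^0$ of normal bundle).} Since $G(k,V^{(m)})$ is homogeneous, $T_{G(k,V^{(m)})}$ is globally generated; convexity then gives $H^1(C,f^* T_{G(k,V^{(m)})})=0$ for every genus $0$ stable map, so the cokernel above is exactly $H^0(C,f^* N_{\iota_m})$ as a $\mathbb T$-representation.

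\textbf{Step 3 (Normalization of $C$).} Let $\nu:\widetilde C=\bigsqcup C_i\to C$ be the normalization. The exact sequence
$$0\to f^* N_{\iota_m}\to \nu_*\nu^* f^* N_{\iota_m}\to \bigoplus_{q\text{ node}}(N_{\iota_m})_{f(q)}\to 0,$$
together with the vanishing $H^1(C_i,f_i^*N_{\iota_m})=0$ (since $S^{\vee}$ is globally generated, so each $f_i^* N_{\iota_m}$ is a sum of non-negative line bundles on $\mathbb P^1$), gives in the representation ring
$$[H^0(C,f^* N_{\iota_m})]=\sum_i [H^0(C_i,f_i^* N_{\iota_m})]-\sum_{q\text{ node}}[(N_{\iota_m})_{f(q)}].$$

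\textbf{Step 4 (Per-component weights).} For a component $C_i$ of degree $d_i$ mapping to the $\mathbb T$-invariant line $\ell$ joining $(a_1\cdots a_{k-1}b_u)$ and $(a_1\cdots a_{k-1}b_v)$, I decompose $S|_{\ell}\simeq \oh_\ell^{\oplus(k-1)}\oplus \oh_\ell(-1)$, where the $k-1$ trivial factors carry weights $\alpha_{a_1},\ldots,\alpha_{a_{k-1}}$ and the $\oh_\ell(-1)$ factor carries weights $\alpha_{b_u},\alpha_{b_v}$ at the two fixed points. Dualizing, tensoring with $\chi_m$, pulling back by the degree $d_i$ map $f_i$, and reading the weights of $H^0$ of each summand (for the $\oh_{C_i}(d_i)$ piece the weights interpolate as $\alpha_m-\tfrac{s}{d_i}\alpha_{b_u}-\tfrac{t}{d_i}\alpha_{b_v}$ with $s+t=d_i$) produces exactly the list \eqref{p21}.

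\textbf{Step 5 (Per-node subtraction).} At a node mapping to the fixed point $(a_1\cdots a_k)$, the fiber $(N_{\iota_m})_{(a_1\cdots a_k)}=S^{\vee}_{(a_1\cdots a_k)}\otimes \chi_m$ carries precisely the weights $\alpha_m-\alpha_{a_1},\ldots,\alpha_m-\alpha_{a_k}$, giving the subtraction \eqref{2p21}. Combining Steps~4 and~5 via Step~3 yields the claimed weight list.

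\textbf{Main obstacle.} The principal technical point is the $H^1$-vanishing in Step~2 (equivalently, verifying convexity across the nodes of $C$). While this is folklore for stable maps to Grassmannians, it must be checked carefully for nodal genus $0$ domains by a Mayer--Vietoris argument across the nodes; alternatively one may invoke the tangent--obstruction analysis of $\overline{\mathcal M}_{0,0}(\grass,d)$ in Oprea's work cited in the introduction. The rest of the argument is a bookkeeping exercise in weights, streamlined by the neat decomposition of $S|_\ell$ on each torus-invariant line.
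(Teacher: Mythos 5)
Your proposal is correct and follows essentially the same route as the paper: the normal bundle exact sequence with $N\simeq S^*\otimes(\text{trivial line bundle of weight }\alpha_m)$, the $H^1$-vanishing of $f^*T_{G(k,V^{(m)})}$ (which the paper simply cites from Lemma~10 of \cite{FP97}, so your ``main obstacle'' is already standard), the normalization sequence with $H^1(C,f^*N)=0$ from global generation of $S^*$, and the component/node weight bookkeeping. Your Step~4 merely makes explicit the decomposition $S|_\ell\simeq\oh_\ell^{\oplus(k-1)}\oplus\oh_\ell(-1)$ that the paper leaves implicit when reading off the weight list.
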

\begin{proof}
The exact sequence
$$ 0\longrightarrow T_{G(k,V^{(m)})} \longrightarrow \iota_m^*T_{G(k,n+1)}
\longrightarrow N \longrightarrow 0,$$
where $N=\iota_m^*T_{G(k,n+1)}/T_{G(k,V^{(m)})}$, gives rise to an exact sequence in cohomology. Using the vanishing $H^1(C, f^*T_{G(k,V^{(m)})})=0$,
\cite[Lemma 10]{FP97}, it reduces to the following short exact sequence
$$0\longrightarrow H^0(C, f^*T_{G(k,V^{(m)})})\longrightarrow H^0(C,
f^*\iota_m^*T_{G(k,n+1)})\longrightarrow H^0(C,f^*N)\longrightarrow 0.$$

The normal bundle $N$ equals $S^*\otimes \iota_m^*\widetilde{Q}/Q$, where
$S$ is the universal subbundle on $G(k,V^{(m)})$, and $Q$ and $\widetilde{Q}$ are the
quotient subbundles of $G(k,V^{(m)})$ and $G(k,n+1)$ respectively.

Let $\phi: \widehat{C}\rightarrow C$ be the normalization. So, the following sequence is exact
$$0\longrightarrow H^0(C,f^*N) \longrightarrow H^0(\widehat
{C},\phi^*f^*N)\longrightarrow  \oplus N_{p_i}\longrightarrow
H^1(C,f^*N)$$ where $p_i$ denote the images in $G(k,V^{(m)})$ of the nodes
of $C$.

Since $S^*$ is globally generated, we have $H^1(C,f^*S^*)=0$. The
vanishing of $H^1(C,f^*N)$ follows from that of $H^1(C,f^*S^*)$, since
$\iota_m^*\widetilde{Q}/Q$ is a trivial line bundle.

The list of weights (\ref{p21}) corresponds to $H^0(\widehat
C,\phi^*f^*N)$, whereas the term $\oplus N_{p_i}$ gives the list (\ref{2p21}) of
weights to be removed.
\end{proof}

\begin{cor}\label{ckn+1}
  The map $$T_{\smap_{0,0}(G(k,V^{(m)}),d),(C,f)}\rightarrow T_{\smap_{0,0}(G(k,n+1),d),(C,\iota_m\circ f)},$$ induced by the inclusion $\iota_m$ above, is injective and the cokernel is given by exactly  the weights listed above at the point $(C,f)$ in $\smap_{0,0}(G(k,V^{(m)}),d)$.
\end{cor}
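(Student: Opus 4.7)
The plan is to combine Proposition \ref{kn+1} with the standard deformation-obstruction sequence that expresses the tangent space to $\overline{M}_{0,0}(X,d)$ in terms of $H^0(C,f^*T_X)$ and the intrinsic deformations of the source curve $C$, and then reduce the statement to the $H^0$-cokernel computation already carried out.

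Concretely, for a smooth convex target $X$ (both $G(k,V^{(m)})$ and $G(k,n+1)$ qualify) and a genus-zero nodal stable map $(C,f)$, the vanishing $H^1(C,f^*T_X)=0$ from \cite[Lemma 10]{FP97} together with the tangent-obstruction theory of stable maps yields a four-term exact sequence
\[
0 \longrightarrow \mathrm{Ext}^0(\Omega_C, \mathcal{O}_C) \longrightarrow H^0(C, f^*T_X) \longrightarrow T_{[(C,f)]}\smap_{0,0}(X,d) \longrightarrow \mathrm{Ext}^1(\Omega_C, \mathcal{O}_C) \longrightarrow 0,
\]
in which the outer $\mathrm{Ext}$ terms are intrinsic to $C$ and depend neither on $X$ nor on $f$.

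First, I would write this sequence for both $(C,f)$ and $(C,\iota_m\circ f)$, producing a commutative ladder in which the leftmost and rightmost vertical maps are the identity on the common $\mathrm{Ext}$ groups, and the middle vertical map is exactly the injection of Proposition \ref{kn+1}. A short diagram chase (equivalently, the snake lemma applied after collapsing each four-term sequence through $\mathrm{image}\bigl(\mathrm{Ext}^0 \to H^0\bigr)$) then forces the induced vertical map on tangent spaces to be injective, and identifies its cokernel canonically with the cokernel of
\[
H^0(C, f^*T_{G(k,V^{(m)})}) \hookrightarrow H^0(C, f^*\iota_m^*T_{G(k,n+1)}).
\]

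Finally, since $\iota_m$ and every arrow in the deformation complex are $\mathbb{T}$-equivariant, and the torus-invariance of $f$ makes $[(C,f)]$ a fixed point so that the tangent space carries a natural $\mathbb{T}$-representation, the above isomorphism of cokernels is itself $\mathbb{T}$-equivariant. Applying Proposition \ref{kn+1} then gives exactly the weights in (\ref{p21}) with those in (\ref{2p21}) removed. The main subtlety to address carefully is the cancellation of the automorphism piece $\mathrm{Ext}^0(\Omega_C,\mathcal{O}_C)$ at the level of the moduli stack (rather than the coarse space) and the verification that the comparison of deformation theories is indeed torus-equivariant; both are standard but should be made explicit, for example by working with the two-term tangent complex of the stable map and its functoriality under change of target.
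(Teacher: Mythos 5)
Your argument is correct and is precisely the standard identification the paper leaves implicit: the corollary is stated without proof as an immediate consequence of Proposition \ref{kn+1}, the point being exactly that the tangent space of the stable map stack sits in the four-term sequence $0 \to \mathrm{Ext}^0(\Omega_C,\mathcal{O}_C) \to H^0(C,f^*T_X) \to T \to \mathrm{Ext}^1(\Omega_C,\mathcal{O}_C)\to 0$ whose outer terms depend only on $C$, so the cokernel on tangent spaces agrees ($\mathbb{T}$-equivariantly) with the cokernel on $H^0$'s. Your snake-lemma chase and the equivariance remark are exactly the details the author suppressed, so there is nothing to correct.
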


If we now consider the inclusion map $\kappa_m:G(k,V_{(m)})\rightarrow G(k+1,n+1)$ defined above, we obtain the following

\begin{pr}\label{k+1n+1}
Let us consider
  $f: C\rightarrow G(k,V_{(m)})$ a torus-invariant genus 0 stable map. Then
  the inclusion $\kappa_m:G(k,V_{(m)})\rightarrow G(k+1,n+1)$ induces an injective
  map $$H^0(C,f^*T_{G(k,V_{(m)})}) \rightarrow H^0(C,f^*\kappa_m^*T_{G(k+1,n+1)})$$
  whose cokernel is given by the following list of weights
\begin{align}\label{p23}[ \alpha_{a'_1}-\alpha_{m},\dots, \alpha_{a'_{n-k-1}}-\alpha_{m},
\{(\frac{s}{d_i}\alpha_{b_u}+\frac{t}{d_i}\alpha_{b_v})-\alpha_{m}\}_{0\leq
  s,t\leq d_i, s+t=d_i} ]\end{align} for every irreducible component of $C$
mapping with degree $d_i$ to the line joining
$(a_1\cdots a_{k-1}b_u)$ and $(a_1\cdots a_{k-1}b_v)$, with the
following weights removed
\begin{align}\label{2p23}[\alpha_{a'_1}-\alpha_{m},\dots,\alpha_{a'_{n-k}}-\alpha_{m}]\end{align}
for every node of $C$ mapping to the point $(a_1\cdots a_k)$, and where 
$(a'_1\cdots a'_{n-k})$ denotes the complement of $(a_1\cdots a_k)$.

\end{pr}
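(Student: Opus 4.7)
The plan is to follow the same strategy as in the proof of Proposition \ref{kn+1}, with the inclusion $\kappa_m$ playing the role of $\iota_m$. Writing $\widetilde{S}, \widetilde{Q}$ for the universal sub- and quotient bundles on $G(k+1, n+1)$ and $S, Q$ for those on $G(k, V_{(m)})$, the key observation is that since $\kappa_m$ sends $\Lambda$ to $p_m^{-1}(\Lambda) = \Lambda \oplus \langle e_m\rangle$, one has a canonical short exact sequence $0 \to \langle e_m\rangle \to \kappa_m^* \widetilde{S} \to S \to 0$ together with an isomorphism $\kappa_m^* \widetilde{Q} \cong Q$. Combined with the identifications $T_{G(k,V_{(m)})} = S^* \otimes Q$ and $T_{G(k+1,n+1)} = \widetilde{S}^* \otimes \widetilde{Q}$, this exhibits the normal bundle $N'$ of $\kappa_m$ as $\langle e_m\rangle^* \otimes Q$, i.e.\ a copy of $Q$ twisted equivariantly by the weight $-\alpha_m$.

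With $N'$ in hand I would pull back the short exact sequence $0 \to T_{G(k, V_{(m)})} \to \kappa_m^* T_{G(k+1, n+1)} \to N' \to 0$ by $f$ and pass to cohomology. Using the vanishing $H^1(C, f^* T_{G(k, V_{(m)})}) = 0$ from \cite[Lemma 10]{FP97}, the long exact sequence collapses to
\[ 0 \to H^0(C, f^* T_{G(k,V_{(m)})}) \to H^0(C, f^*\kappa_m^* T_{G(k+1,n+1)}) \to H^0(C, f^* N') \to 0, \]
which yields the claimed injectivity and identifies the cokernel with $H^0(C, f^* N')$.

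To read off the weights, I would pass to the normalization $\phi: \widehat{C} \to C$. Since $Q$ is a quotient of the trivial bundle on $G(k,V_{(m)})$, the pullback $f^* N'$ is globally generated, which (together with $p_a(C) = 0$) forces $H^1(C, f^*N') = 0$ and yields an exact sequence
\[ 0 \to H^0(C, f^*N') \to H^0(\widehat{C}, \phi^*f^*N') \to \oplus\, N'_{p_i} \to 0. \]
On an irreducible component mapping with degree $d_i$ to the line joining $(a_1\cdots a_{k-1}b_u)$ and $(a_1\cdots a_{k-1}b_v)$, the restriction of $Q$ splits as a trivial subbundle of rank $n-k-1$ (contributing constant sections of weights $\alpha_{a'_j}$ for the indices in the complement of $\{a_1,\dots,a_{k-1},b_u,b_v\}$) plus the tautological $\mathcal{O}(1)$ on the line; pulling back the latter along the degree $d_i$ cover gives $\mathcal{O}(d_i)$, whose sections carry weights $\tfrac{s}{d_i}\alpha_{b_u} + \tfrac{t}{d_i}\alpha_{b_v}$ with $s+t=d_i$. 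Twisting by $-\alpha_m$ yields exactly (\ref{p23}). At a node mapping to $(a_1\cdots a_k)$, the fiber of $N'$ carries the full complement list $\alpha_{a'_1} - \alpha_m, \dots, \alpha_{a'_{n-k}} - \alpha_m$, giving (\ref{2p23}).

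The main obstacle is the correct equivariant identification of $N'$; once the factor $\langle e_m\rangle^*$ and the sign of its weight are pinned down in the first step, the remaining computation is precisely dual to the one carried out for Proposition \ref{kn+1}, with the roles of the sub- and quotient bundles interchanged.
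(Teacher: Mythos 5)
Your proposal is correct and follows essentially the same route as the paper: the same normal bundle exact sequence, the same identification $N=(\ker(\widetilde S\to S))^*\otimes Q=\langle e_m\rangle^*\otimes Q$, the vanishing of $H^1$ via global generation of $Q$ and triviality of the twisting line, and the normalization sequence to separate the component contributions (\ref{p23}) from the nodal corrections (\ref{2p23}). The only difference is that you spell out the splitting $Q|_{\text{line}}\cong\mathcal O(1)\oplus\mathcal O^{n-k-1}$ and the resulting weights explicitly, which the paper leaves implicit.
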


\begin{proof}  Similarly to the proof of the previous proposition, an exact
  sequence in cohomology arises from the exact sequence
$$ 0\longrightarrow T_{G(k,V_{(m)})} \longrightarrow \kappa_m^*T_{G(k+1,n+1)}
\longrightarrow N \longrightarrow 0,$$
where $N=\kappa_m^*T_{G(k+1,n+1)}/T_{G(k,V_{(m)})}$.  The vanishing of $H^1(C,
f^*T_{G(k,V_{(m)})})$, as before, reduces it to the following short exact
sequence
$$0\longrightarrow H^0(C, f^*T_{G(k,V_{(m)})})\longrightarrow H^0(C,
f^*\kappa_m^*T_{G(k+1, n+1)})\longrightarrow H^0(C,f^*N)\longrightarrow 0$$

The normal bundle $N$ equals $(\ker(\widetilde{S}\rightarrow
S))^*\otimes Q$, where $Q$ is the quotient subbundle, and $S$ and
$\widetilde{S}$ are the universal subbundles of $G(k,V_{(m)})$ and
$G(k+1, n+1)$ respectively.

Let $\phi: \widehat{C}\rightarrow C$ be the normalization. Then, $$0\longrightarrow H^0(C,f^*N) \longrightarrow H^0(\widehat
C,\phi^*f^*N)\longrightarrow  \oplus N_{p_i}\longrightarrow
H^1(C,f^*N)$$ is an exact sequence, where $p_i$ denote the images in $G(k,V_{(m)})$ of the nodes
of $C$.
Since $Q$ is globally generated, we have $H^1(C,f^*Q)=0$. The
vanishing of $H^1(C,f^*N)$ follows from that of $H^1(C,f^*Q)$, since
$\ker(\widetilde{S}\rightarrow S)$ is a trivial line bundle.

The list of weights (\ref{p23}) corresponds to $H^0(\widehat
C,\phi^*f^*N)$, whereas the term $\oplus N_{p_i}$ gives the list (\ref{2p23}) of
weights to be removed.
\end{proof}

\begin{cor}\label{ck+1n+1}
  The map $$T_{\smap_{0,0}(G(k,V_{(m)}),d),(C,f)}\rightarrow
  T_{\smap_{0,0}(G(k+1,n+1),d),(C,\kappa_m\circ f)},$$
  induced by the inclusion $\kappa_m$, is injective and the
  cokernel is given by exactly the weights listed above, at the point
  $(C,f)$ in $\smap_{0,0}(G(k,V_{(m)}),d)$.
\end{cor}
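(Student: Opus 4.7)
The plan is to deduce the statement from Proposition \ref{k+1n+1} by the same deformation-theoretic argument that produces Corollary \ref{ckn+1} from Proposition \ref{kn+1}. For any smooth target $X$, the $\mathbb{T}$-equivariant tangent space to $\smap_{0,0}(X,d)$ at a genus zero stable map $(C,f)$ sits in a four-term exact sequence
$$0 \to \mathrm{Ext}^0(\Omega_C, \oh_C) \to H^0(C, f^*T_X) \to T_{\smap_{0,0}(X,d)}(C,f) \to \mathrm{Ext}^1(\Omega_C, \oh_C) \to H^1(C, f^*T_X),$$
whose outer terms depend only on the abstract nodal curve $C$ and encode its infinitesimal automorphisms and its deformations. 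Since $C$ is unchanged under post-composing $f$ with $\kappa_m$, these outer terms are not affected by the corollary's inclusion.

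I would write this sequence simultaneously for $(C,f)$ into $G(k,V_{(m)})$ and for $(C,\kappa_m\circ f)$ into $G(k+1,n+1)$, and assemble the two copies into a commutative ladder of short/long exact sequences. The outer columns are the identity; the two $H^0$ columns are joined by the injection furnished by Proposition \ref{k+1n+1}; and the vanishings $H^1(C, f^*T_{G(k,V_{(m)})}) = 0$ and $H^1(C, f^*N) = 0$ that were already established in the proof of that proposition collapse the obstruction column on the $G(k,V_{(m)})$ side. A snake lemma (or direct diagram) chase then identifies the kernel and cokernel of the middle column with those of the $H^0$ column, which by Proposition \ref{k+1n+1} are zero and exactly the $\mathbb{T}$-representation with weights \eqref{p23} and the weights \eqref{2p23} removed.

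The main point that requires care, rather than a genuine obstacle, is to make sure the entire ladder is $\mathbb{T}$-equivariant so that the cokernel is recognized as a weighted representation with the precise list of weights above, and not merely up to an unspecified isomorphism of vector spaces. This is automatic because every sheaf and every map in the construction (the normal bundle $N$, the long exact sequences in cohomology, the normalization, and the deformation-theoretic sequence above) is built from $\mathbb{T}$-equivariant data. The substantive computation of the cokernel has already been carried out in Proposition \ref{k+1n+1}, so the proof is a short diagram chase parallel to the one behind Corollary \ref{ckn+1}.
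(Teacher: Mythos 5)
Your argument is correct and is exactly the reasoning the paper leaves implicit: the corollary is stated without proof as an immediate consequence of Proposition \ref{k+1n+1}, the point being precisely that the tangent space of the moduli stack differs from $H^0(C,f^*T_X)$ only by the terms $\mathrm{Ext}^i(\Omega_C,\oh_C)$, which are intrinsic to $C$ and hence unchanged by composing with $\kappa_m$, so the kernel and cokernel are inherited from the $H^0$ level (using $H^1(C,f^*T_{G(k,V_{(m)})})=0$). Your equivariant ladder/snake-lemma chase is a faithful write-up of that omitted step.
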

\section{Poincar\'e polynomials of small degree stable maps}\label{poincare}

As a consequence of the work by Oprea in \cite{Op06}, the Betti numbers of the moduli stack of zero pointed genus zero degree $d$ stable maps to the Grassmannian can be computed from the fixed loci in the manner of the usual Bia{\l}ynicki-Birula decomposition \cite{BB73,BB76}. To describe the fixed loci of the action on $\smap\stog$ induced by that on the Grassmannian, we use trees of irreducible
rational curves whose labels on the vertices are tuples of natural numbers (as in Section \ref{loci}), and whose labels on the irreducible components indicate the degree of the map into the target variety. After this, our problem amounts to organizing the combinatorial data involved.

\begin{remark} In \cite[\S 3.2]{Kon95}, Kontsevich gives a description of  the fixed locus of the moduli space of degree $d$ stable maps to the projective space $\p^n$ under the action of $(\C^*)^{n+1}$. To keep
track of the combinatorial data arising from this description, the
fixed points are written as labelled connected graphs. He also gives complete formulas for the
weights of the fixed loci, which are displayed in terms of combinatorial data.
\end{remark}
\subsection{The degree 2 stable maps case}\label{deg2}
In this subsection we give the expression for the Poincar\'e polynomial of the moduli space $\smap_{0,0}(G(k, n),2)$.

Let us first recall the generating function for the Betti numbers of the classical Grassmannian $G(k, n)$:
$$
P_{G(k,n)}(q)=P_G(q)=\frac{\prod_{i=1}^{n}(1-q^i)}{\prod_{i=1}^{n-k}(1-q^i)\prod^{k}_{i=1}(1-q^i)}.
$$
\setlength{\unitlength}{0.4cm}
\begin{table}
 \caption{Fixed loci for degree 2 stable maps}\label{table1}
\begin{tabular}{|c|c|c|c|}

\hline

\small Configuration &\small  Labelling & \small Strata in & \small Number\\ \hline

\begin{picture}(3,1.5)
\put(0,0.6){\line(1,0){2.7}}
\end{picture} &
\begin{picture}(3,1.5)

\put(0.2,0.6){\line(1,0){2.7}}
\put(-.2,1){\makebox(0,0){\tiny$(j)$}}
\put(1.55,1){\makebox(0,0){\tiny$2$}}
\put(3.2,1){\makebox(0,0){\tiny$(i)$}}
\end{picture} &  \raisebox{1ex}{\small$G(1, 2)$}& \raisebox{1ex}{\small$\frac{1}{2}\binom{n}{k} k(n-k)$}\\\hline

\multirow{4}{*}{
\raisebox{-17ex}{
\begin{picture}(3,5)
\put(0,4.5){\line(1,0){2.7}}
\put(0.2,4.7){\line(0,-1){2.7}}
\end{picture}}} & 
\begin{picture}(3,3.7)
\put(0.2,2.8){\line(1,0){2.7}}
\put(0.4,3){\line(0,-1){2.7}}
\put(0.1,1.7){\makebox(0,0){\tiny$1$}}
\put(0,3.2){\makebox(0,0){\tiny$(j)$}}
\put(1.55,3.2){\makebox(0,0){\tiny$1$}}
\put(3.1,3.2){\makebox(0,0){\tiny$(i)$}}
\put(0,0.1){\makebox(0,0){\tiny$(i)$}} 
\end{picture} & \raisebox{3.5ex}{\small $G(1, 2)$} &  \multirow{4}{*}{\raisebox{-10ex}{\small $\binom{n}{k}\binom{k(n-k)+1}{2}$}} \\\cline{2-3}
&
\begin{picture}(3,3.7)
\put(0.2,2.8){\line(1,0){2.7}}
\put(0.4,3){\line(0,-1){2.7}}
\put(0.1,1.7){\makebox(0,0){\tiny$1$}}
\put(0,3.2){\makebox(0,0){\tiny$(i)$}}
\put(1.55,3.2){\makebox(0,0){\tiny$1$}}
\put(3.1,3.2){\makebox(0,0){\tiny$(j)$}}
\put(0,0.1){\makebox(0,0){\tiny$(j')$}} 
\end{picture}
&\raisebox{3.5ex}{\small $G(1, 3)$}&\\\cline{2-3}
&
\begin{picture}(3,3.7)
\put(0.2,2.8){\line(1,0){2.7}}
\put(0.4,3){\line(0,-1){2.7}}
\put(0.1,1.7){\makebox(0,0){\tiny$1$}}
\put(0,3.2){\makebox(0,0){\tiny$(ij)$}}
\put(1.55,3.2){\makebox(0,0){\tiny$1$}}
\put(3,3.2){\makebox(0,0){\tiny$(jj')$}}
\put(0,0.1){\makebox(0,0){\tiny$(ij')$}} 
\end{picture}&\raisebox{3.5ex}{\small $G(2, 3)$}&\\\cline{2-3}
&
\begin{picture}(3,3.7)
\put(0.2,2.8){\line(1,0){2.7}}
\put(0.4,3){\line(0,-1){2.7}}
\put(0.1,1.7){\makebox(0,0){\tiny$1$}}
\put(0,3.2){\makebox(0,0){\tiny$(ij)$}}
\put(1.55,3.2){\makebox(0,0){\tiny$1$}}
\put(3,3.2){\makebox(0,0){\tiny$(ij')$}}
\put(0,0.1){\makebox(0,0){\tiny$(i'j)$}} 
\end{picture}
&\raisebox{3.5ex}{\small $G(2, 4)$}&\\
\hline
\end{tabular}
\end{table}

\begin{tm}
\label{gkn2}
Let X denote the moduli stack of stable maps
  $\smap\stogii$. Then its Poincar\'e polynomial is given by
\begin{eqnarray*}
 P_{X}(q)= \frac{(1-q^k)(1-q^{n-k})\left((1+q^n)(1+q^3)-q(1+q)(q^k+q^{n-k})\right)}{(1-q)^2(1-q^2)^2}P_G(q).
\end{eqnarray*}

\end{tm}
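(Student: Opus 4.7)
The plan is to apply Bia{\l}ynicki--Birula to $X = \smap_{0,0}(G(k,n),2)$. By the results of Oprea \cite{Op06} cited in the introduction, the decomposition is valid for this stack and gives
\begin{equation*}
P_X(q) \;=\; \sum_F q^{d^-(F)} P_F(q),
\end{equation*}
where $F$ ranges over the connected components of the $\mathbb T$-fixed locus on $X$ and $d^-(F)$ is the number of negative $\mathbb T$-weights on $T_X|_F/T_F$, relative to a generic one-parameter subgroup.

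First, I would enumerate the fixed components as labelled trees, as recorded in Table~\ref{table1}. A $\mathbb T$-fixed stable map of degree~$2$ is a tree whose vertices are labelled by fixed points of $G(k,n)$, whose edges correspond to irreducible components of the source and carry the degree of the restricted map, and whose adjacent vertex-labels differ in exactly one index. For degree~$2$ there are two tree shapes: a single edge of degree~$2$ (Configuration~1) and two degree-$1$ edges meeting at a central vertex (Configuration~2). The four sub-cases of Configuration~2 record how the ``swapped'' indices at the central vertex interact; together they contribute the count $\binom{n}{k}\binom{k(n-k)+1}{2}$ of fixed components, versus $\tfrac12\binom{n}{k}k(n-k)$ for Configuration~1.

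Next, I would determine the $\mathbb T$-weight list on $T_X|_F$ at each type. Kontsevich's formulas \cite[\S 3.2]{Kon95} give the complete weight list on $\smap_{0,0}(\mathbb{P}^{N-1},2)$. Starting from these and iterating Corollaries~\ref{ckn+1} and~\ref{ck+1n+1}---which add the correction weights of Propositions~\ref{kn+1} and~\ref{k+1n+1} along inclusions $\iota_m$, $\kappa_m$---produces the weight list on $T_X|_F$ for each fixed stable map to $G(k,n)$. From this list I extract $d^-(F)$ and the Poincar\'e polynomial $P_F(q)$ of the fixed component; in each row of Table~\ref{table1} this polynomial equals $P_G$ of the ``Strata in'' Grassmannian recorded there, reflecting the internal moduli of the fixed map (e.g.\ the $\mathbb P^1$ of degree-$2$ self-covers of a fixed $T$-invariant line, in Configuration~1).

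Finally, I would sum over all fixed components. Grouping by type and summing over all vertex-labelings in $G(k,n)$ factors out a copy of $P_G(q)$ and produces $q$-analogues of the integer counts in the ``Number'' column. The main obstacle is the final algebraic simplification: the raw sum is a rational function in $q$ with many summands, indexed by the five sub-types and by the orderings of the free indices $i,j,j',i'$ (which govern the sign of each weight), and one must pull out $P_G(q)/\bigl((1-q)^2(1-q^2)^2\bigr)$ and reduce the remaining numerator to
\begin{equation*}
(1-q^k)(1-q^{n-k})\bigl((1+q^n)(1+q^3) - q(1+q)(q^k+q^{n-k})\bigr),
\end{equation*}
using the $q$-binomial identities of the Appendix. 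Tracking positive versus negative weights consistently across the four sub-cases of Configuration~2 is the delicate combinatorial heart of the calculation.
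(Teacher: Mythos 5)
Your overall strategy is the paper's: invoke Oprea's results to reduce to a Bia{\l}ynicki--Birula-type computation over the torus-fixed locus, enumerate the fixed loci as labelled trees (Table \ref{table1}), obtain the tangent weights by starting in a small Grassmannian and iterating Corollaries \ref{ckn+1} and \ref{ck+1n+1}, and then simplify the resulting $q$-binomial sums. However, there is a concrete error in how you identify the fixed locus. The torus action (with distinct generic weights) has \emph{isolated} fixed points on $\smap_{0,0}(G(k,n),2)$, so each fixed point contributes a single monomial $q^{\#\{\text{positive weights on } T_X\}}$; there is no factor $P_F(q)$. The ``Strata in'' column of Table \ref{table1} does not record the topology of a positive-dimensional fixed component: it records the minimal coordinate sub-Grassmannian ($G(1,2)$, $G(1,3)$, $G(2,3)$, or $G(2,4)$) in which the labelled configuration naturally lives, which serves as the starting point of the weight computation before extending to $G(k,n)$ via $\iota_m$ and $\kappa_m$. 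In particular your parenthetical ``$\mathbb P^1$ of degree-$2$ self-covers of a fixed $T$-invariant line'' does not exist in the fixed locus: the only torus-fixed double cover of an invariant line is the one totally ramified over its two fixed points, a single isolated (stacky) point.

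This is not a cosmetic slip: with your formula $P_X(q)=\sum_F q^{d^-(F)}P_F(q)$ and $P_F$ taken to be $P_{G(1,2)}(q)=1+q$, etc., the Configuration~1 fixed points alone would contribute $\binom{n}{k}k(n-k)$ cells at $q=1$ instead of the correct $\tfrac12\binom{n}{k}k(n-k)$, so the Euler characteristic, and hence the stated polynomial, would come out wrong. The factor $P_G(q)$ in the final answer does not arise from the geometry of a fixed component; it emerges from the $q$-binomial identities (via $\binom{n}{k}_q=P_{G(k,n)}(q)$) after summing the monomials $q^{\#\text{positive weights}}$ over all labelings and all partitions $S_\nu\cup T_\nu$ of the complementary index sets, exactly as in the computation leading from (\ref{cont23}) to (\ref{2cont23}). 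Once you replace ``sum of $q^{d^-(F)}P_F(q)$ over components'' by ``sum of $q^{\#\text{positive weights}}$ over isolated fixed points,'' the rest of your outline matches the paper's proof.
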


\begin{proof}
For degree 2 stable maps, all strata in the fixed point decomposition can be obtained from strata in copies of $G(1,2)$, $G(1,3)$, $G(2,3)$, and $G(2,4)$ and their embeddings into bigger Grassmannians. In Table \ref{table1} above, we list all possible configurations for the fixed loci of the action. (For simplicity, we omit the list of all possible inequality conditions on the indices $i,j,j'$, and $i'$. Each of the cases in the table splits up into different subcases.) 

The contributions of the different fixed points of the $\C^*$-action
on $\smap\stogii$ to the Poincar\'e polynomial are all computed in a
similar way. As an illustration of the method, we will compute the contribution of two of the (many) possible tree configurations. 

Firstly, let us consider the contribution to the fixed points of $\smap\stogii$
given by
\setlength{\unitlength}{0.5cm}
\begin{center}
\begin{picture}(10,3.7)
\put(1,2.8){\line(1,0){2.7}}
\put(1.2,3){\line(0,-1){2.7}}
\put(0.8,3.2){\makebox(0,0){\tiny$(ij)$}}
\put(1.3,0){\makebox(0,0){\tiny$(ij')$}}
\put(4.3,2.9){\makebox(0,0){\tiny$(jj')$}}
\put(7.5,1.65){\makebox(0,0){\Small$i<j<j'$}}
\end{picture}
\end{center}

Taking $i=1$, $j=2$, and $j'=3$ on $G(2, 3)$ we get a list of 5
weights. Of these, 4 are positive, under the convention on weights
$\alpha_1\ll\alpha_2\ll\alpha_3$.

We consider now the process of extending to $G(k,n)$: let
\begin{align*}
S_0\cup T_0=\{1,\ldots,i-1\},\qquad & S_1\cup T_1=\{i+1,\ldots,j-1\}\\
S_2\cup T_2=\{j+1,\ldots,j'-1\},\qquad & S_3\cup T_3=\{j'+1,\ldots,n\}
\end{align*}
with $i_\nu=\#S_\nu$ and $j_\nu=\#T_\nu$ ($0\le\nu\le 3$) related by
\begin{align*}
&i_0+i_1+i_2+i_3=k-2,\quad &&i_0+j_0=i-1, \quad &&i_2+j_2=j'-j-1 \\
&j_0+j_1+j_2+j_3=n-k-1,\quad&&i_1+j_1=j-i-1, \quad &&i_3+j_3=n-j'.
\end{align*}

If we identify
\begin{align*}
G(2,3)&=G(2,\C\langle e_i,e_j,e_{j'}\rangle),\\
G(2,n-k+2)&=G(2,\C\langle e_\tau,\,\tau\in \{i,j,j'\}\cup T_0\cup T_1
\cup T_2\cup T_3\rangle),
\end{align*}
the embedding $G(2,3)\hookrightarrow G(2,n-k+2)$ gives rise, by Corollary \ref{ckn+1},  to the following added weights: 
$$[\alpha_t-\alpha_i, \alpha_t-\alpha_{j}, \alpha_t-\alpha_{j'}, \alpha_t-\alpha_{j'} : t\in T_0\cup T_1\cup T_2\cup T_3].$$

By Corollary \ref{ck+1n+1}, the further embedding in $G(k,n)$ gives rise to the additional weights: $$[\alpha_i-\alpha_s, \alpha_{j'}-\alpha_s, \alpha_{j}-\alpha_s, \alpha_t-\alpha_s :  s\in S_0\cup S_1\cup S_2\cup S_3, t\in T_0\cup T_1\cup T_2\cup T_3].$$

If we let $n_\nu=\#\{(s,t)\in S_\nu\times T_\nu\,|\,t>s\}$ then the
number of added weights under $G(2,3)\hookrightarrow G(k,n)$ that are
positive is
$$
  n_0+n_1+n_2+n_3+3i_0+2i_1+i_2+j_1+2j_2+4j_3+i_0j_1+i_0j_2+i_0j_3+i_1j_2+i_1j_3+i_2j_3.
$$
\begin{remark}\label{qbinom}
Sums over partitions of a finite set into the disjoint union of two
sets of given cardinality give rise to \emph{q}-binomial coefficients:
$$
\sum_{\stackrel{S\cup T={\{1,\dots,n\}}}{ \#S=k,\#T=n-k}}q^{\#\{(s,t)\in
  S\times T | t>s\}}={n\choose k}_q.$$
\end{remark}

Therefore the contribution to the Poincar\'e polynomial of
$\overline{{M}}_{0,0}(G(k,n),2)$ is
\begin{align}\label{cont23}\sum q^{4+3i_0+2i_1+i_2+j_1+2j_2+4j_3+i_0j_1+i_0j_2+i_0j_3+i_1j_2+i_1j_3+i_2j_3}
  \tbinom{i_0+j_0}{i_0}_q
  \tbinom{i_1+j_1}{i_1}_q \tbinom{i_2+j_2}{i_2}_q
  \tbinom{i_3+j_3}{i_3}_q,
\end{align}
where the sum is over $i_0+i_1+i_2+i_3=k-2$, $i_0+j_0=i-1$,
$i_1+j_1=j-i-1$, $i_2+j_2=j'-j-1$, $i_3+j_3=n-j'$.

To compute sums of this kind we make use of the identities on
$q$-binomial coefficients given in the Appendix. For example, applying the corresponding
identity to $\sum q^{(i_2+2)j_3}{i_2+j_2 \choose i_2}_q{i_3+j_3
  \choose i_3}_q$, the sum (\ref{cont23}) equals
\begin{align*}
  \sum&q^{4+3i_0+2i_1+i_2+j_1+2j'_2+i_0j_1+i_0j'_2+i_1j'_2}
  \cdot\tbinom{n-j+1}{i_2+i_3+2}_q \tbinom{i_0+j_0}{i_0}_q
  \tbinom{i_1+j_1}{i_1}_q\\
  &-\sum
  q^{7+4i_0+3i_1+2i_2+j_1+2j'_2+i_0j_1+i_0j'_2+i_1j'_2}\cdot\tbinom{n-j}{i_2+i_3+2}_q
  \tbinom{i_0+j_0}{i_0}_q \tbinom{i_1+j_1}{i_1}_q\ ,
\end{align*}
where $j'_2=j_2+j_3$, and the sums are over $i_0+i_1+i_2+i_3=k-2$,
$i_0+j_0=i-1$, $i_1+j_1=j-i-1$, and $i_2+i_3+j'_2=n-j-1$ and
$i_2+i_3+j'_2=n-j-2$ respectively.

We recognize that the sums above are equal to
$$q^4\sum_{i_0+i_1+i_2+i_3=k-2}q^{3i_0+2i_1+i_2}{\tbinom{n+1}{k+2}_q}-q^7\sum_{i_0+i_1+i_2+i_3=k-2}q^{4i_0+3i_1+2i_2}{\tbinom{n}{k+2}_q}.$$

Continuing applying identities on $q$-binomial coefficients leads to
the expression
\begin{align}
 \label{2cont23}q^4{\tbinom{k+1}{3}_q}{\tbinom{n+1}{k+2}_q}-q^7{\tbinom{k+2}{4}_q}{\tbinom{n}{k+2}_q}+q^8{\tbinom{k+1}{4}_q}{\tbinom{n}{k+2}_q}.\end{align}

The same process can be applied to any of the configurations on Table \ref{table1}. For example, another degree two map, fixed by the torus action, is that given by
the following tree of $\p^1$'s:
\setlength{\unitlength}{0.5cm}
\begin{center}
\begin{picture}(10,3.7)
\put(1,2.8){\line(1,0){2.7}}
\put(1.2,3){\line(0,-1){2.7}}
\put(0.8,3.2){\makebox(0,0){\tiny$(j)$}}
\put(1.3,0){\makebox(0,0){\tiny$(i)$}}
\put(4.1,2.9){\makebox(0,0){\tiny$(i)$}}
\put(7.5,1.65){\makebox(0,0){\Small$j<i$}}
\end{picture}
\end{center}

Consider again the process of extending to $G(k, n)$: Corollaries \ref{ckn+1} and \ref{ck+1n+1} give rise to lists of additional weights. If we use again Remark \ref{qbinom} and the identities in the Appendix, we obtain that the contribution of this configuration to the Poincar\'e polynomial of $\overline{M}_{0,0}(G(k, n),2)$ is
\begin{align}
  \label{2cont12} 
  q^2
  \left(
    \tbinom{k+2}{3}_q
    \tbinom{n+1}{k+2}_q
    -
    q^2
    \tbinom{k+1}{3}_q
    \tbinom{n+1}{k+2}_q
    -
    q^2
    \tbinom{k+3}{4}_q
    \tbinom{n}{k+2}_q
    +
    q^3
    \tbinom{k+2}{4}_q
    \tbinom{n}{k+2}_q
  \right.  \\
  \left.
    +
    q^5
    \tbinom{k+2}{4}_q
    \tbinom{n}{k+2}_q
    -
    q^6
    \tbinom{k+1}{4}_q
    \tbinom{n}{k+2}_q
  \right). \nonumber
\end{align}

For each of the possible fixed point configurations described in Table \ref{table1}, we get different sums involving $q$-binomial coefficients, e.g. expressions like (\ref{2cont23}) and (\ref{2cont12}). Adding them up, we obtain the rational function in the statement of the Theorem.
\end{proof}

\subsection{Poincar\'e polynomials of degree 3 stable maps} \label{deg3}
In this subsection we present the generating function for
the Poincar\'e polynomial in the case of no pointed genus zero
degree three stable maps to the Grassmannian. The method used to organize the combinatorial data involved and prove Theorem \ref{gkn3} is the same as the one described for the case of degree 2 stable maps. We will soon see that the number of different configurations for the labelled connected graphs representing the fixed locus increases rapidly.

\begin{table}
\caption{Fixed loci for degree 3 stable maps I}\label{table2}

\begin{tabular}{|c|c|c|c|}

\hline

\small Configuration &\small  Labelling & \small Strata in & \small Number\\ \hline

\begin{picture}(3,1.5)
\put(0,0.6){\line(1,0){2.7}}
\end{picture} &
\begin{picture}(3,1.5)

\put(0.2,0.6){\line(1,0){2.7}}
\put(-.2,1){\makebox(0,0){\tiny$(j)$}}
\put(1.55,1){\makebox(0,0){\tiny$3$}}
\put(3.2,1){\makebox(0,0){\tiny$(i)$}}
\end{picture} &  \raisebox{1ex}{\small$G(1,2)$}& \raisebox{1ex}{\small$\frac{1}{2}\binom{n}{k} k(n-k)$}\\\hline

\multirow{4}{*}{
\raisebox{-17ex}{
\begin{picture}(3,5)
\put(0,4.5){\line(1,0){2.7}}
\put(0.2,4.7){\line(0,-1){2.7}}
\end{picture}}} & 
\begin{picture}(3,3.7)
\put(0.2,2.8){\line(1,0){2.7}}
\put(0.4,3){\line(0,-1){2.7}}
\put(0.1,1.7){\makebox(0,0){\tiny$1$}}
\put(0,3.2){\makebox(0,0){\tiny$(j)$}}
\put(1.55,3.2){\makebox(0,0){\tiny$2$}}
\put(3.1,3.2){\makebox(0,0){\tiny$(i)$}}
\put(0,0.1){\makebox(0,0){\tiny$(i)$}} 
\end{picture} & \raisebox{3.5ex}{\small $G(1,2)$} &  \multirow{4}{*}{\raisebox{-10ex}{\small$\binom{n}{k} k^2(n-k)^2$}} \\\cline{2-3}
&
\begin{picture}(3,3.7)
\put(0.2,2.8){\line(1,0){2.7}}
\put(0.4,3){\line(0,-1){2.7}}
\put(0.1,1.7){\makebox(0,0){\tiny$1$}}
\put(0,3.2){\makebox(0,0){\tiny$(i)$}}
\put(1.55,3.2){\makebox(0,0){\tiny$2$}}
\put(3.1,3.2){\makebox(0,0){\tiny$(j)$}}
\put(0,0.1){\makebox(0,0){\tiny$(j')$}} 
\end{picture}
&\raisebox{3.5ex}{\small $G(1,3)$}&\\\cline{2-3}
&
\begin{picture}(3,3.7)
\put(0.2,2.8){\line(1,0){2.7}}
\put(0.4,3){\line(0,-1){2.7}}
\put(0.1,1.7){\makebox(0,0){\tiny$1$}}
\put(0,3.2){\makebox(0,0){\tiny$(ij)$}}
\put(1.55,3.2){\makebox(0,0){\tiny$2$}}
\put(3,3.2){\makebox(0,0){\tiny$(jj')$}}
\put(0,0.1){\makebox(0,0){\tiny$(ij')$}} 
\end{picture}&\raisebox{3.5ex}{\small $G(2,3)$}&\\\cline{2-3}
&
\begin{picture}(3,3.7)
\put(0.2,2.8){\line(1,0){2.7}}
\put(0.4,3){\line(0,-1){2.7}}
\put(0.1,1.7){\makebox(0,0){\tiny$1$}}
\put(0,3.2){\makebox(0,0){\tiny$(ij)$}}
\put(1.55,3.2){\makebox(0,0){\tiny$2$}}
\put(3,3.2){\makebox(0,0){\tiny$(ij')$}}
\put(0,0.1){\makebox(0,0){\tiny$(i'j)$}} 
\end{picture}
&\raisebox{3.5ex}{\small $G(2,4)$}&\\

\hline

\end{tabular}
\end{table}

\begin{tm}\label{gkn3}
Let X be the moduli stack of stable maps
  $\smap\stogiii$, then its Poincar\'e polynomial $P_X(q)$ is given by the following expression:

\begin{align*}
 (1-q^k)(1-q^{n-k})&  	\left(
                          			F_1(q)(1+q^{2n})
                          			+
	                          		F_2(q)q^2(q^{2k}+q^{2n-2k})
                         		\right. \\    
                        	     &\left.
         		                  		+
                   	       		(1+q)^2
                          				\left(
                            				F_3(q)q^n(1+q^2) 
                            				-
                            				F_4(q)q(1+q^n)(q^{k}+q^{n-k})
		                          		\right)  
                         		\right)\\
		                 &\phantom{,}\cdot \frac{P_{G}(q)}{(1-q)^{2}(1-q^2)^{3}(1-q^3)^{2}},
\end{align*}
where
\begin{eqnarray*}
&&F_1(q)=1+2q^2+3q^3+3q^4-q^5+q^6-3q^7-3q^8-2q^9-q^{11},\\ 
&& F_2(q)=1+6q+3q^2+2q^3-2q^4-3q^5-6q^6-q^7,\\
&&F_3(q)=1+5q^2+2q^3-2q^4-5q^5-q^7,\\
&&F_4(q)=2+3q^2+q^3-q^4-3q^5-2q^7.
\end{eqnarray*}
\end{tm}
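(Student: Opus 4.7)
The plan is to follow the scheme used in the proof of Theorem \ref{gkn2} step by step, the only difference being the additional combinatorial complexity that degree $3$ brings. By the Białynicki-Birula-type decomposition of $\smap\stogiii$ deduced from \cite{Op06} and recalled at the start of Section \ref{poincare}, $P_X(q)$ is determined by the torus-fixed locus: each connected component $Z$ contributes $q^{w(Z)}P_Z(q)$, where $w(Z)$ counts the positive characters on the normal bundle to $Z$.

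First I would classify the torus-fixed stable maps. A genus zero degree $3$ stable map has as its dual graph a tree whose edge-degrees sum to $3$. There are four topological types: (i) a single edge of degree $3$; (ii) a two-edge chain with edge-degrees $(1,2)$; (iii) a three-edge chain with all edge-degrees equal to $1$; and (iv) a ``star'' with three degree-$1$ edges meeting at a contracted central component (stable because that component then carries three nodes). Types (i) and (ii) are already tabulated in Table \ref{table2}; for (iii) and (iv) I would produce analogous tables enumerating all admissible labelings of vertices by $k$-tuples $(a_1<\dots<a_k)$ subject to the rule that adjacent vertex-labels differ in exactly one index (since each edge of degree $d_i$ must map to a torus-invariant line of the Grassmannian).

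For each configuration I would then compute its contribution to $P_X(q)$ as in the degree two case. Each labeling lives naturally in a ``minimal'' Grassmannian $G(k',n')$ --- the examples appearing are $G(1,2)$, $G(1,3)$, $G(1,4)$, $G(2,3)$, $G(2,4)$, $G(2,5)$, $G(3,4)$, $G(3,5)$, and $G(3,6)$, together with the subcases coming from coincidences of indices. On this minimal Grassmannian the weights are read off directly from the local description at the fixed point. Corollaries \ref{ckn+1} and \ref{ck+1n+1} then supply the additional weights coming from the sequence of embeddings $\iota_m$ and $\kappa_m$ needed to reach $\grass$. Summing $q$ raised to the total count of positive weights over all partitions $S_\nu\cup T_\nu$ of the free index sets packages each contribution, via Remark \ref{qbinom}, as a multiple sum of products of $q$-binomial coefficients, exactly as in (\ref{cont23}) and (\ref{2cont12}).

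The main obstacle is the ensuing bookkeeping. The number of subconfigurations, especially for type (iv), is substantially larger than in Table \ref{table2}, and the resulting multiple sums --- now indexed by up to five or six partitioned subsets --- require repeated, careful applications of the $q$-binomial identities collected in the Appendix to collapse into closed form. Once each configuration's contribution is written as a polynomial combination of products of two $q$-binomials, I would add all contributions, extract the common factor $(1-q^k)(1-q^{n-k})P_G(q)/\bigl((1-q)^2(1-q^2)^3(1-q^3)^2\bigr)$, and match the residual polynomial against $F_1(q)(1+q^{2n}) + F_2(q)q^2(q^{2k}+q^{2n-2k}) + (1+q)^2\bigl(F_3(q)q^n(1+q^2) - F_4(q)q(1+q^n)(q^k+q^{n-k})\bigr)$; this final matching identifies the polynomials $F_i(q)$ stated in the theorem.
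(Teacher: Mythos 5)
Your proposal matches the paper's own proof: the paper likewise enumerates the fixed-point configurations of degree $3$ (the single degree-$3$ edge, the $(1,2)$-chain, the $(1,1,1)$-chain, and the tripod with contracted central component, tabulated in Tables \ref{table2} and \ref{table3} with strata in $G(1,2)$ through $G(3,6)$) and then invokes verbatim the method of Theorem \ref{gkn2} --- the additional weights from Corollaries \ref{ckn+1} and \ref{ck+1n+1}, the $q$-binomial packaging of Remark \ref{qbinom}, and the identities in the Appendix --- to sum the contributions. No substantive difference in route; your write-up is, if anything, more explicit about the classification of dual graphs than the paper's.
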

\begin{proof}
We observe that for degree 3 stable maps all strata in the fixed point decomposition can be obtained from strata in copies of $G(1, 2)$, $G(1,3)$, $G(1,4)$, $G(2,3)$, $G(2,4)$, $G(2,5)$, $G(3,4)$, $G(3,5)$, and $G(3,6)$, and their embeddings into bigger Grassmannians. In Tables \ref{table2} and \ref{table3}, we list all possible configurations and labellings for fixed points of the torus action. To obtain the expression in the statement of the Theorem, we use the method in the proof of Theorem \ref{gkn2}, which relies on the results in Section \ref{torus}.

\begin{table}
\caption{Fixed loci for degree 3 stable maps II}\label{table3}
\begin{tabular}{|c|c|c||c|c|c|}
\hline
\small Configuration &\small  Labelling & \small Strata in &\small  Labelling & \small Strata in &\small Number \\ \hline
\multirow{4}{*}{
\raisebox{-22ex}{
\begin{picture}(3,5)
\put(0,4.5){\line(1,0){2.7}}
\put(0,4.7){\line(1,-1){2.5}}
\put(0.2,4.7){\line(0,-1){2.7}}
\end{picture}}} & 
\begin{picture}(3,3.7)
\put(0.2,2.8){\line(1,0){2.7}}
\put(0.2,3){\line(1,-1){2.5}}
\put(0.4,3){\line(0,-1){2.7}}
\put(0.1,1.7){\makebox(0,0){\tiny$1$}}
\put(0,3.2){\makebox(0,0){\tiny$(i)$}}
\put(1.55,3.2){\makebox(0,0){\tiny$1$}}
\put(3.1,3.2){\makebox(0,0){\tiny$(j)$}}
\put(3,0.2){\makebox(0,0){\tiny$(j)$}}
\put(1.9,1.8){\makebox(0,0){\tiny$1$}}
\put(0,0.1){\makebox(0,0){\tiny$(j)$}} 
\end{picture} & \raisebox{3.5ex}{\small $G(1,2)$} & 
\begin{picture}(3,3.7)
\put(0.2,2.8){\line(1,0){2.7}}
\put(0.2,3){\line(1,-1){2.5}}
\put(0.4,3){\line(0,-1){2.7}}
\put(0.1,1.7){\makebox(0,0){\tiny$1$}}
\put(0,3.2){\makebox(0,0){\tiny$(i)$}}
\put(1.55,3.2){\makebox(0,0){\tiny$1$}}
\put(3.1,3.2){\makebox(0,0){\tiny$(j)$}}
\put(3,0.2){\makebox(0,0){\tiny$(j)$}}
\put(1.9,1.8){\makebox(0,0){\tiny$1$}}
\put(0,0.1){\makebox(0,0){\tiny$(j')$}} 
\end{picture} & \raisebox{3.5ex}{\small $G(1,3)$}&  \multirow{4}{*}{\raisebox{-15ex}{\small$\binom{n}{k}\binom{k(n-k)+2}{3}$}}\\\cline{2-5}
&
\begin{picture}(3,3.7)
\put(0.2,2.8){\line(1,0){2.7}}
\put(0.2,3){\line(1,-1){2.5}}
\put(0.4,3){\line(0,-1){2.7}}
\put(0.1,1.7){\makebox(0,0){\tiny$1$}}
\put(0,3.2){\makebox(0,0){\tiny$(i)$}}
\put(1.55,3.2){\makebox(0,0){\tiny$1$}}
\put(3.1,3.2){\makebox(0,0){\tiny$(j)$}}
\put(3,0.2){\makebox(0,0){\tiny$(i')$}}
\put(1.9,1.8){\makebox(0,0){\tiny$1$}}
\put(0,0.1){\makebox(0,0){\tiny$(j')$}} 
\end{picture}
&\raisebox{3.5ex}{\small $G(1,4)$}& \begin{picture}(3,3.7)
\put(0.2,2.8){\line(1,0){2.7}}
\put(0.2,3){\line(1,-1){2.5}}
\put(0.4,3){\line(0,-1){2.7}}
\put(0.1,1.7){\makebox(0,0){\tiny$1$}}
\put(0,3.2){\makebox(0,0){\tiny$(ij)$}}
\put(1.55,3.2){\makebox(0,0){\tiny$1$}}
\put(3.1,3.2){\makebox(0,0){\tiny$(ij')$}}
\put(3,0.2){\makebox(0,0){\tiny$(ij')$}}
\put(1.9,1.8){\makebox(0,0){\tiny$1$}}
\put(0,0.1){\makebox(0,0){\tiny$(jj')$}} 
\end{picture} & \raisebox{3.5ex}{\small $G(2,3)$} &\\\cline{2-5}
&
\begin{picture}(3,3.7)
\put(0.2,2.8){\line(1,0){2.7}}
\put(0.2,3){\line(1,-1){2.5}}
\put(0.4,3){\line(0,-1){2.7}}
\put(0.1,1.7){\makebox(0,0){\tiny$1$}}
\put(0,3.2){\makebox(0,0){\tiny$(ij)$}}
\put(1.55,3.2){\makebox(0,0){\tiny$1$}}
\put(3.1,3.2){\makebox(0,0){\tiny$(ij')$}}
\put(3,0.2){\makebox(0,0){\tiny$(ij')$}}
\put(1.9,1.8){\makebox(0,0){\tiny$1$}}
\put(0,0.1){\makebox(0,0){\tiny$(ji')$}} 
\end{picture} & \raisebox{3.5ex}{\small $G(2,4)$}&\begin{picture}(3,3.7)
\put(0.2,2.8){\line(1,0){2.7}}
\put(0.2,3){\line(1,-1){2.5}}
\put(0.4,3){\line(0,-1){2.7}}
\put(0.1,1.7){\makebox(0,0){\tiny$1$}}
\put(0,3.2){\makebox(0,0){\tiny$(ij)$}}
\put(1.55,3.2){\makebox(0,0){\tiny$1$}}
\put(3,3.2){\makebox(0,0){\tiny$(ij')$}}
\put(3,0.2){\makebox(0,0){\tiny$(ii')$}}
\put(1.9,1.8){\makebox(0,0){\tiny$1$}}
\put(0,0.1){\makebox(0,0){\tiny$(jj')$}} 
\end{picture}&\raisebox{3.5ex}{\small $G(2,4)$}&\\\cline{2-5}
&
\begin{picture}(3,3.7)
\put(0.2,2.8){\line(1,0){2.7}}
\put(0.2,3){\line(1,-1){2.5}}
\put(0.4,3){\line(0,-1){2.7}}
\put(0.1,1.7){\makebox(0,0){\tiny$1$}}
\put(0,3.2){\makebox(0,0){\tiny$(ij)$}}
\put(1.55,3.2){\makebox(0,0){\tiny$1$}}
\put(3,3.2){\makebox(0,0){\tiny$(ij')$}}
\put(3,0.2){\makebox(0,0){\tiny$(ii')$}}
\put(1.9,1.8){\makebox(0,0){\tiny$1$}}
\put(0,0.1){\makebox(0,0){\tiny$(jl)$}} 
\end{picture}
&\raisebox{3.5ex}{\small $G(2,5)$}&\begin{picture}(3,3.7)
\put(0.2,2.8){\line(1,0){2.7}}
\put(0.2,3){\line(1,-1){2.5}}
\put(0.4,3){\line(0,-1){2.7}}
\put(0.1,1.7){\makebox(0,0){\tiny$1$}}
\put(0,3.2){\makebox(0,0){\tiny$(ijl)$}}
\put(1.55,3.2){\makebox(0,0){\tiny$1$}}
\put(3,3.2){\makebox(0,0){\tiny$(ijj')$}}
\put(3,0.2){\makebox(0,0){\tiny$(ilj')$}}
\put(1.9,1.8){\makebox(0,0){\tiny$1$}}
\put(0,0.1){\makebox(0,0){\tiny$(jlj')$}} 
\end{picture}
&\raisebox{3.5ex}{\small $G(3,4)$}&\\

\cline{2-5}
&
\begin{picture}(3,3.7)
\put(0.2,2.8){\line(1,0){2.7}}
\put(0.2,3){\line(1,-1){2.5}}
\put(0.4,3){\line(0,-1){2.7}}
\put(0.1,1.7){\makebox(0,0){\tiny$1$}}
\put(0,3.2){\makebox(0,0){\tiny$(ijl)$}}
\put(1.55,3.2){\makebox(0,0){\tiny$1$}}
\put(3,3.2){\makebox(0,0){\tiny$(ijj')$}}
\put(3,0.2){\makebox(0,0){\tiny$(ilj')$}}
\put(1.9,1.8){\makebox(0,0){\tiny$1$}}
\put(0,0.1){\makebox(0,0){\tiny$(jli')$}} 
\end{picture}
&\raisebox{3.5ex}{\small $G(3,5)$}&\begin{picture}(3,3.7)
\put(0.2,2.8){\line(1,0){2.7}}
\put(0.2,3){\line(1,-1){2.5}}
\put(0.4,3){\line(0,-1){2.7}}
\put(0.1,1.7){\makebox(0,0){\tiny$1$}}
\put(0,3.2){\makebox(0,0){\tiny$(ijl)$}}
\put(1.55,3.2){\makebox(0,0){\tiny$1$}}
\put(3,3.2){\makebox(0,0){\tiny$(iji')$}}
\put(3,0.2){\makebox(0,0){\tiny$(ilj')$}}
\put(1.9,1.8){\makebox(0,0){\tiny$1$}}
\put(0,0.1){\makebox(0,0){\tiny$(jll')$}} 
\end{picture}
&\raisebox{3.5ex}{\small $G(3,6)$}&\\

\hline
\hline
\multirow{4}{*}{
\raisebox{-35ex}{
\begin{picture}(3,3.7)
\put(0,2.8){\line(1,0){2.7}}
\put(0.2,3){\line(0,-1){2.7}}
\put(0,0.5){\line(1,0){2.7}}
\end{picture}}} & 
\begin{picture}(3,3.7)
\put(0,2.8){\line(1,0){2.7}}
\put(0.2,3){\line(0,-1){2.7}}
\put(0,0.5){\line(1,0){2.7}}
\put(0,1.7){\makebox(0,0){\tiny$1$}}
\put(0,3.2){\makebox(0,0){\tiny$(i)$}}
\put(1.55,3.2){\makebox(0,0){\tiny$1$}}
\put(3.1,3.2){\makebox(0,0){\tiny$(j)$}}
\put(3,0.2){\makebox(0,0){\tiny$(i)$}}
\put(1.55,0.1){\makebox(0,0){\tiny$1$}}
\put(0,0.1){\makebox(0,0){\tiny$(j)$}} 
\end{picture} & \raisebox{3.5ex}{\small $G(1,2)$} & 
\begin{picture}(3,3.7)
\put(0,2.8){\line(1,0){2.7}}
\put(0.2,3){\line(0,-1){2.7}}
\put(0,0.5){\line(1,0){2.7}}
\put(0,1.7){\makebox(0,0){\tiny$1$}}
\put(0,3.2){\makebox(0,0){\tiny$(i)$}}
\put(1.55,3.2){\makebox(0,0){\tiny$1$}}
\put(3.1,3.2){\makebox(0,0){\tiny$(j)$}}
\put(3,0.2){\makebox(0,0){\tiny$(j)$}}
\put(1.55,0.1){\makebox(0,0){\tiny$1$}}
\put(0,0.1){\makebox(0,0){\tiny$(j')$}} 
\end{picture} & \raisebox{3.5ex}{\small $G(1,3)$}&  \multirow{7}{*}{\raisebox{-32ex}{\small$ \frac{1}{2}\binom{n}{k} k^3(n-k)^3$}}\\\cline{2-5}
&
\begin{picture}(3,3.7)
\put(0,2.8){\line(1,0){2.7}}
\put(0.2,3){\line(0,-1){2.7}}
\put(0,0.5){\line(1,0){2.7}}
\put(0,1.7){\makebox(0,0){\tiny$1$}}
\put(0,3.2){\makebox(0,0){\tiny$(i)$}}
\put(1.55,3.2){\makebox(0,0){\tiny$1$}}
\put(3.1,3.2){\makebox(0,0){\tiny$(j)$}}
\put(3,0.2){\makebox(0,0){\tiny$(i')$}}
\put(1.55,0.1){\makebox(0,0){\tiny$1$}}
\put(0,0.1){\makebox(0,0){\tiny$(j')$}} 
\end{picture}
&\raisebox{3.5ex}{\small $G(1,4)$}&\begin{picture}(3,3.7)
\put(0,2.8){\line(1,0){2.7}}
\put(0.2,3){\line(0,-1){2.7}}
\put(0,0.5){\line(1,0){2.7}}
\put(0,1.7){\makebox(0,0){\tiny$1$}}
\put(0,3.2){\makebox(0,0){\tiny$(i)$}}
\put(1.55,3.2){\makebox(0,0){\tiny$1$}}
\put(3.1,3.2){\makebox(0,0){\tiny$(j)$}}
\put(3,0.2){\makebox(0,0){\tiny$(j')$}}
\put(1.55,0.1){\makebox(0,0){\tiny$1$}}
\put(0,0.1){\makebox(0,0){\tiny$(j)$}} 
\end{picture} & \raisebox{3.5ex}{\small $G(1,3)$} &\\\cline{2-5}
&
\begin{picture}(3,3.7)
\put(0,2.8){\line(1,0){2.7}}
\put(0.2,3){\line(0,-1){2.7}}
\put(0,0.5){\line(1,0){2.7}}
\put(0,1.7){\makebox(0,0){\tiny$1$}}
\put(0,3.2){\makebox(0,0){\tiny$(ij)$}}
\put(1.55,3.2){\makebox(0,0){\tiny$1$}}
\put(3.1,3.2){\makebox(0,0){\tiny$(ij')$}}
\put(3,0.2){\makebox(0,0){\tiny$(jj')$}}
\put(1.55,0.1){\makebox(0,0){\tiny$1$}}
\put(0,0.1){\makebox(0,0){\tiny$(ij')$}} 
\end{picture} & \raisebox{3.5ex}{\small $G(2,3)$}&\begin{picture}(3,3.7)
\put(0,2.8){\line(1,0){2.7}}
\put(0.2,3){\line(0,-1){2.7}}
\put(0,0.5){\line(1,0){2.7}}\put(0,1.7){\makebox(0,0){\tiny$1$}}
\put(0,3.2){\makebox(0,0){\tiny$(ij)$}}
\put(1.55,3.2){\makebox(0,0){\tiny$1$}}
\put(3.1,3.2){\makebox(0,0){\tiny$(ij')$}}
\put(3,0.2){\makebox(0,0){\tiny$(j'i')$}}
\put(1.55,0.1){\makebox(0,0){\tiny$1$}}
\put(0,0.1){\makebox(0,0){\tiny$(ij')$}} 
\end{picture} & \raisebox{3.5ex}{\small $G(2,4)$}&\\\cline{2-5}
&
\begin{picture}(3,3.7)
\put(0,2.8){\line(1,0){2.7}}
\put(0.2,3){\line(0,-1){2.7}}
\put(0,0.5){\line(1,0){2.7}}
\put(0,1.7){\makebox(0,0){\tiny$1$}}
\put(0,3.2){\makebox(0,0){\tiny$(ij)$}}
\put(1.55,3.2){\makebox(0,0){\tiny$1$}}
\put(3,3.2){\makebox(0,0){\tiny$(ij')$}}
\put(3,0.2){\makebox(0,0){\tiny$(ij')$}}
\put(1.55,0.1){\makebox(0,0){\tiny$1$}}
\put(0,0.1){\makebox(0,0){\tiny$(jj')$}} 
\end{picture}&\raisebox{3.5ex}{\small $G(2,3)$}&\begin{picture}(3,3.7)
\put(0,2.8){\line(1,0){2.7}}
\put(0.2,3){\line(0,-1){2.7}}
\put(0,0.5){\line(1,0){2.7}}\put(0,1.7){\makebox(0,0){\tiny$1$}}
\put(0,3.2){\makebox(0,0){\tiny$(ij)$}}
\put(1.55,3.2){\makebox(0,0){\tiny$1$}}
\put(3,3.2){\makebox(0,0){\tiny$(ij')$}}
\put(3,0.2){\makebox(0,0){\tiny$(ji')$}}
\put(1.55,0.1){\makebox(0,0){\tiny$1$}}
\put(0,0.1){\makebox(0,0){\tiny$(ii')$}} 
\end{picture}
&\raisebox{3.5ex}{\small $G(2,4)$}&\\\cline{2-5}
&
\begin{picture}(3,3.7)
\put(0,2.8){\line(1,0){2.7}}
\put(0.2,3){\line(0,-1){2.7}}
\put(0,0.5){\line(1,0){2.7}}
\put(0,1.7){\makebox(0,0){\tiny$1$}}
\put(0,3.2){\makebox(0,0){\tiny$(ij)$}}
\put(1.55,3.2){\makebox(0,0){\tiny$1$}}
\put(3,3.2){\makebox(0,0){\tiny$(ij')$}}
\put(3,0.2){\makebox(0,0){\tiny$(j'i')$}}
\put(1.55,0.1){\makebox(0,0){\tiny$1$}}
\put(0,0.1){\makebox(0,0){\tiny$(ii')$}} 
\end{picture}&\raisebox{3.5ex}{\small $G(2,4)$}&\begin{picture}(3,3.7)
\put(0,2.8){\line(1,0){2.7}}
\put(0.2,3){\line(0,-1){2.7}}
\put(0,0.5){\line(1,0){2.7}}
\put(0,1.7){\makebox(0,0){\tiny$1$}}
\put(0,3.2){\makebox(0,0){\tiny$(ij)$}}
\put(1.55,3.2){\makebox(0,0){\tiny$1$}}
\put(3,3.2){\makebox(0,0){\tiny$(ij')$}}
\put(3,0.2){\makebox(0,0){\tiny$(i'l)$}}
\put(1.55,0.1){\makebox(0,0){\tiny$1$}}
\put(0,0.1){\makebox(0,0){\tiny$(ii')$}} 
\end{picture}
&\raisebox{3.5ex}{\small $G(2,5)$}&\\\cline{2-5}
&\begin{picture}(3,3.7)
\put(0,2.8){\line(1,0){2.7}}
\put(0.2,3){\line(0,-1){2.7}}
\put(0,0.5){\line(1,0){2.7}}
\put(0,1.7){\makebox(0,0){\tiny$1$}}
\put(0,3.2){\makebox(0,0){\tiny$(ij)$}}
\put(1.55,3.2){\makebox(0,0){\tiny$1$}}
\put(3,3.2){\makebox(0,0){\tiny$(ij')$}}
\put(3,0.2){\makebox(0,0){\tiny$(j'i')$}}
\put(1.55,0.1){\makebox(0,0){\tiny$1$}}
\put(0,0.1){\makebox(0,0){\tiny$(ji')$}} 
\end{picture}&\raisebox{3.5ex}{\small $G(2,4)$}
&\begin{picture}(3,3.7)
\put(0,2.8){\line(1,0){2.7}}
\put(0.2,3){\line(0,-1){2.7}}
\put(0,0.5){\line(1,0){2.7}}
\put(0,1.7){\makebox(0,0){\tiny$1$}}
\put(0,3.2){\makebox(0,0){\tiny$(ij)$}}
\put(1.55,3.2){\makebox(0,0){\tiny$1$}}
\put(3,3.2){\makebox(0,0){\tiny$(ij')$}}
\put(3,0.2){\makebox(0,0){\tiny$(ii')$}}
\put(1.55,0.1){\makebox(0,0){\tiny$1$}}
\put(0,0.1){\makebox(0,0){\tiny$(ji')$}} 
\end{picture}
&\raisebox{3.5ex}{\small $G(2,4)$}&\\\cline{2-5}
&
\begin{picture}(3,3.7)
\put(0,2.8){\line(1,0){2.7}}
\put(0.2,3){\line(0,-1){2.7}}
\put(0,0.5){\line(1,0){2.7}}
\put(0,1.7){\makebox(0,0){\tiny$1$}}
\put(0,3.2){\makebox(0,0){\tiny$(ij)$}}
\put(1.55,3.2){\makebox(0,0){\tiny$1$}}
\put(3,3.2){\makebox(0,0){\tiny$(ij')$}}
\put(3,0.2){\makebox(0,0){\tiny$(i'l)$}}
\put(1.55,0.1){\makebox(0,0){\tiny$1$}}
\put(0,0.1){\makebox(0,0){\tiny$(ji')$}} 
\end{picture}
&\raisebox{3.5ex}{\small $G(2,5)$}&&&\\

\cline{2-5}
&
\begin{picture}(3,3.7)
\put(0,2.8){\line(1,0){2.7}}
\put(0.2,3){\line(0,-1){2.7}}
\put(0,0.5){\line(1,0){2.7}}
\put(0,1.7){\makebox(0,0){\tiny$1$}}
\put(0,3.2){\makebox(0,0){\tiny$(ijl)$}}
\put(1.55,3.2){\makebox(0,0){\tiny$1$}}
\put(3,3.2){\makebox(0,0){\tiny$(ijl')$}}
\put(3,0.2){\makebox(0,0){\tiny$(jll')$}}
\put(1.55,0.1){\makebox(0,0){\tiny$1$}}
\put(0,0.1){\makebox(0,0){\tiny$(ill')$}} 
\end{picture}&\raisebox{3.5ex}{\small $G(3,4)$}&\begin{picture}(3,3.7)
\put(0,2.8){\line(1,0){2.7}}
\put(0.2,3){\line(0,-1){2.7}}
\put(0,0.5){\line(1,0){2.7}}
\put(0,1.7){\makebox(0,0){\tiny$1$}}
\put(0,3.2){\makebox(0,0){\tiny$(ijl)$}}
\put(1.55,3.2){\makebox(0,0){\tiny$1$}}
\put(3,3.2){\makebox(0,0){\tiny$(ijl')$}}
\put(3,0.2){\makebox(0,0){\tiny$(j'll')$}}
\put(1.55,0.1){\makebox(0,0){\tiny$1$}}
\put(0,0.1){\makebox(0,0){\tiny$(ill')$}} 
\end{picture}
&\raisebox{3.5ex}{\small $G(3,5)$}&\\

\cline{2-5}
&
\begin{picture}(3,3.7)
\put(0,2.8){\line(1,0){2.7}}
\put(0.2,3){\line(0,-1){2.7}}
\put(0,0.5){\line(1,0){2.7}}
\put(0,1.7){\makebox(0,0){\tiny$1$}}
\put(0,3.2){\makebox(0,0){\tiny$(ijl)$}}
\put(1.55,3.2){\makebox(0,0){\tiny$1$}}
\put(3,3.2){\makebox(0,0){\tiny$(ijl')$}}
\put(3,0.2){\makebox(0,0){\tiny$(j'll')$}}
\put(1.55,0.1){\makebox(0,0){\tiny$1$}}
\put(0,0.1){\makebox(0,0){\tiny$(ij'l)$}} 
\end{picture}&\raisebox{3.5ex}{\small $G(3,5)$}&\begin{picture}(3,3.7)
\put(0,2.8){\line(1,0){2.7}}
\put(0.2,3){\line(0,-1){2.7}}
\put(0,0.5){\line(1,0){2.7}}
\put(0,1.7){\makebox(0,0){\tiny$1$}}
\put(0,3.2){\makebox(0,0){\tiny$(ijl)$}}
\put(1.55,3.2){\makebox(0,0){\tiny$1$}}
\put(3,3.2){\makebox(0,0){\tiny$(ijl')$}}
\put(3,0.2){\makebox(0,0){\tiny$(i'j'l)$}}
\put(1.55,0.1){\makebox(0,0){\tiny$1$}}
\put(0,0.1){\makebox(0,0){\tiny$(ij'l)$}} 
\end{picture}
&\raisebox{3.5ex}{\small $G(3,6)$}&\\
\hline
\end{tabular}
\end{table}
\end{proof}

The following is a table of Betti numbers for some values of $k$ and $n$. The sequences of Betti numbers in the first two rows agree with the results in \cite{GP06}. Moreover, in degree 2, for $k=1$ and arbitrary $n$, one can check that our expression agrees with that due to Getzler and Pandharipande and presented explicitly in \cite[(39)]{BO03}.
\begin{equation*}
\begin{tabular}{|c|c|l|} \hline
  $k$ &$n$&Betti numbers of $\smap_{0,0}(G(k,n),3)$ \\
  \hline $1$ &$3$&$(1,2,5,7,9,7,5,2,1)$ \\
  $1$ &$4$&$(1,2,6,10,17,20,24,20,17,10,6,2,1)$ \\
  $2$ &$4$&$(1,3,10,22,41,60,73,73,60,41,22,10,3,1)$ \\
  \hline
\end{tabular}
\end{equation*}

\section*{Appendix: $q$-Binomial coefficients}\label{append}
 The \emph{q-binomial coefficients} are classical combinatorial expressions with
 diverse interpretations. They arise in number theory, combinatorics,
 linear algebra, and finite geometry. \\

We define for $0\leq k \leq n$ the \emph{q-binomial coefficient} ${n\choose
  k}_q$, first introduced by Gauss, as
$${n\choose
  k}_q=\frac{(1-q^n)(1-q^{n-1})\cdots(1-q^{n-k+1})}{(1-q^k)\cdots(1-q)}.$$

The following basic identities are proven in \cite{Gau63}:
\begin{eqnarray*}
\label{id1}{n \choose k}_q&=&{n \choose n-k}_q,\\
\label{id2}{n \choose k}_q&=&{n-1 \choose k}_q+q^{n-k}{n-1 \choose k-1}_q,\\
\label{id3}{n \choose k}_q&=&q^k{n-1 \choose k}_q+{n-1 \choose
  k-1}_q,\\
\label{id4}\sum_{j=0}^a q^j\binom{d+j}{j}_q&=&\binom{d+a+1}{a}_q.
\end{eqnarray*}

One further identity, established using the basic identities in
$q$-binomial coefficients above, is
\begin{eqnarray*}
\label{id5}\sum_{i+j=a}q^{(c+1)j}{c+i \choose i}_q{d+j \choose
  j}_q&=&{c+d+a+1\choose a}_q.
\end{eqnarray*}
More general identities can be obtained for
$\sum_{i+j=a}q^{(c+n)j}{c+i \choose i}_q{d+j \choose j}_q$, whenever
$n>0$, using the previous identities. Sums of this kind
are used in the proof of Theorems \ref{gkn2} and \ref{gkn3}, but they will not be
stated explicitly.

\subsubsection*{Acknowledgments} The author is indebted to Andrew Kresch for his advice and constant support. He also thanks Pedro Marques and Loring Tu for many helpful discussions. 
\vspace{-2mm}

\end{document}